\newtheorem{theorem}{Theorem}[section]
\newtheorem{problem}[theorem]{Problem}
\newtheorem{proposition}[theorem]{Proposition}
\newtheorem{definition}[theorem]{Definition}
\theoremstyle{definition}
\newtheorem{remark}[theorem]{Remark}
\title[Fock space as a de Branges-Rovnyak space]{The Fock space as a de Branges-Rovnyak space}
\author[D. Alpay]{Daniel Alpay}
\address{(DA)
Faculty of Mathematics, Physics, and Computation\\
Schmid College of Science and Technology\\
Chapman University\\
One University Drive
Orange, California 92866\\
USA}
\email{alpay@chapman.edu}
\thanks{Daniel Alpay thanks the Foster G. and Mary McGaw Professorship in
Mathematical Sciences, which supported this research.}
\author[F. Colombo]{Fabrizio Colombo}
\address{(FC) Politecnico di
Milano\\Dipartimento di Matematica\\Via E. Bonardi, 9\\20133 Milano,
Italy}
\email{fabrizio.colombo@polimi.it}
\author[I. Sabadini]{Irene Sabadini}
\address{(IS) Politecnico di
Milano\\Dipartimento di Matematica\\Via E. Bonardi, 9\\20133 Milano\\Italy}
\email{irene.sabadini@polimi.it}
\begin{document}
\maketitle
\begin{abstract}
We show that de Branges-Rovnyak spaces include as special cases a number of spaces, such as the Hardy space, the
Fock space, the Hardy-Sobolev space and the Dirichlet space. We present a general framework in which all these spaces can be obtained by specializing a sequence that appears in the construction.
We show how to exploit this approach to solve interpolation problems in the Fock space.
\end{abstract}

\noindent AMS Classification: 46E22, 47A57

\noindent Key words:
de Branges-Rovnyak spaces, Fock space, interpolation, reproducing kernel methods.
\noindent{\em }
\date{today}

\section{Introduction}
\setcounter{equation}{0}
Let $S$ be a function on the open unit disc $\mathbb D$ of the complex plane which is analytic and contractive and consider the kernel
\begin{equation}
\label{kernelKSscalar}
K_S(z,w)=\frac{1-S(z)S(w)^*}{1-z\overline{w}}.
\end{equation}
The reproducing kernel Hilbert space with kernel $K_S$ was introduced by de Branges and Rovnyak in \cite{dbr1,dbr2} and it is nowadays known as de Branges-Rovnyak space and is denoted by $\mathcal H(K_S)$ (in the literature it is also denoted by $\mathcal H(S)$).
 These type of spaces can be defined also when the kernel $K_S$ is defined by
\begin{equation}
\label{kernelKS}
K_S(z,w)=\frac{I_{\mathcal C}-S(z)S(w)^*}{1-z\overline{w}}
\end{equation}
 where the function $S$ is operator valued and  analytic in the open unit disk. They form an important family of reproducing kernel spaces, in which several problems can be naturally formulated and solved.
\\
In the operator valued case, we consider two Hilbert spaces $\mathcal U$, $\mathcal C$ and the space $\mathbf{L}(\mathcal  U, \mathcal  C)$
 of bounded, linear operators from $\mathcal  U$ to $\mathcal  C$. The set of functions $S:\ \mathbb D \to \mathbf{L}(\mathcal  U, \mathcal  C)$ analytic in $\mathbb D$ whose values are contractive operators is denoted by $\mathfrak S(\mathcal U,\mathcal C)$ and such $S$ are called Schur functions. We note that one can further generalize by taking Pontryagin spaces instead of Hilbert spaces. \\
 There are various characterizations of Schur functions, for example in terms of the operator $M_S$ of multiplication by $S$, but the characterization which is relevant in this work is in terms of the positivity of the kernel $K_S$.
\\
To this end, let us recall that the kernel $K_S$ is positive in $\mathbb D$ if for any $N\in\mathbb N$, $z_1,\ldots, z_N\in\mathbb D$ and $c_1,\ldots, c_N\in\mathcal Y$
$$
\sum_{i,j=1}^N\langle K_S(z_i,z_j)y_j,y_i\rangle_{\mathcal C}\geq 0.
$$
In this paper we show that various spaces, such as the Hardy space, the
Fock space, the Hardy-Sobolev space and the Dirichlet space, are special cases of
de Branges-Rovnyak spaces. In fact, we have a general construction in which all these spaces can be obtained by specializing a sequence that appears in it. This unified approach to these spaces has obvious advantages. For example,  we will  show how to exploit the results  in the papers
\cite{MR2794390,bbol2011}, in which the authors formulate and solve interpolation problems  in the de Branges-Rovnyak spaces, to explicitly solve interpolation problems in the Fock space.
This is a far reaching result, and it is not clear how to obtain it using different techniques. 

Although
these spaces focus on operator-valued functions, we consider here interpolation
of scalar functions to ease the notation.

The paper contains five sections, besides this Introduction. In Section 2 we show a construction which allows to embed various spaces in the framework of de Branges-Rovnyak spaces via functions of the form \eqref{S=z}. In Section 3 we recall some known facts on interpolation in reproducing kernel Hilbert spaces and we prove a simple but useful necessary and sufficients condition to solve an interpolation problem in this context. In Section 4 we show a characterization of contractive multipliers between de Branges-Rovnyak spaces, while in Section 5 we study and solve a Nevanlinna-Pick problem in this context. Finally, in Section 6 we provide the Schur algorithm for functions of the form \eqref{S=z}.

We conclude with a notation: for $a\in\ell_2(\mathbb N_0)$ we denote by $a^*$ adjoint of the map $z\mapsto za$ defined from $\mathbb C$ into
$\ell_2(\mathbb N_0)$. It is the functional:
\begin{equation}
\label{a*}
a^*(b)=\langle b,a\rangle_{\ell_2(\mathbb N_0)},\quad b\in \ell_2(\mathbb N_0),
\end{equation}
and we will use the notation $ba^*$ rather than $a^*(b)$.
\section{A new family of $\mathcal H(K_S)$ spaces}
\setcounter{equation}{0}
In this section we illustrate how to embed the spaces mentioned in the introduction in the framework of
the de Branges-Rovnyak spaces.
The crucial fact is the following result:
\begin{theorem}
\label{f=b}
Let $\mathbf c=
(c_n)_{n\in\mathbb N_0}$ be an non-decreasing sequence of numbers with $c_0=1$, and let
\begin{equation}
K_{\mathbf c}(z,w)=\sum_{n=0}^\infty\frac{z^n\overline{w}^n}{c_n}.
\end{equation}
Then there exists a $\ell_2(\mathbb N_0)$-valued entire function $S(z)$ such that
\begin{equation}
\label{KcHs}
K_{\mathbf c}(z,w)=\frac{1-\langle S(z),S(w)\rangle_{\ell_2(\mathbb N_0)}}{1-z\overline{w}}.
\end{equation}
\end{theorem}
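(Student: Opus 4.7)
The plan is to verify identity \eqref{KcHs} directly by clearing the denominator and using a telescoping rearrangement, which makes the non-decreasing hypothesis on $\mathbf{c}$ produce manifestly non-negative squared coefficients.

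First, I would multiply the series $K_{\mathbf c}(z,w)=\sum_{n\ge 0} z^n\overline{w}^n/c_n$ by $1-z\overline{w}$, write the product as the difference of two series, reindex, and collect terms. This gives
\begin{equation*}
(1-z\overline w)\,K_{\mathbf c}(z,w)=1+\sum_{n=1}^{\infty}\left(\frac{1}{c_n}-\frac{1}{c_{n-1}}\right)z^n\overline w^n,
\end{equation*}
hence
\begin{equation*}
1-(1-z\overline w)\,K_{\mathbf c}(z,w)=\sum_{n=1}^{\infty}\left(\frac{1}{c_{n-1}}-\frac{1}{c_n}\right)z^n\overline w^n.
\end{equation*}
The crucial point is that the hypothesis $c_{n-1}\le c_n$ is used precisely here, to guarantee that every coefficient on the right-hand side is $\ge 0$; this is what will allow the right-hand side to be written as a Gram-type kernel on $\ell_2(\mathbb N_0)$.

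Setting $d_n^2:=1/c_{n-1}-1/c_n\ge 0$ for $n\ge 1$, I would then define the candidate
\begin{equation*}
S(z):=(d_{n+1}\,z^{n+1})_{n\in\mathbb N_0}.
\end{equation*}
A direct computation shows that $\langle S(z),S(w)\rangle_{\ell_2(\mathbb N_0)}=\sum_{n\ge 1}d_n^2\,z^n\overline w^n$, which matches the expression found above and so yields \eqref{KcHs}. To verify that $S$ is indeed $\ell_2(\mathbb N_0)$-valued, one observes that the $d_n^2$ themselves telescope: $\sum_{n=1}^N d_n^2=1-1/c_N\le 1$, so $(d_n)_{n\ge 1}\in\ell_2$, and more generally
\begin{equation*}
\|S(z)\|_{\ell_2}^2=\sum_{n\ge 1}d_n^2\,|z|^{2n}\le |z|^2\sum_{n\ge 0}\frac{|z|^{2n}}{c_n}=|z|^2\,K_{\mathbf c}(z,z),
\end{equation*}
which is finite on the natural domain of $K_{\mathbf c}$. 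Entireness of $S$ as an $\ell_2$-valued map then follows from the uniform-on-compacta convergence of the polynomial truncations $S_N(z)=(d_1 z,\ldots,d_N z^N,0,0,\ldots)$, together with analyticity of each component.

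The only subtle step is really noticing the telescoping structure that turns $(1-z\overline w)K_{\mathbf c}(z,w)$ into $1$ minus a kernel with manifestly non-negative coefficients once the monotonicity hypothesis is invoked; once this is in place, constructing $S$ and checking its analytic regularity are essentially mechanical. A minor presentational point is that if one wants $S$ to be genuinely entire (and not merely analytic on the natural disc of $K_{\mathbf c}$), one should note that the growth of $c_n$ in each example of interest (in particular $c_n=n!$ for the Fock space) is more than enough to ensure this via the bound above.
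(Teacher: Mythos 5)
Your proposal is correct and follows essentially the same route as the paper: multiply by $1-z\overline{w}$, telescope to get $1-\sum_{n\ge 1}(1/c_{n-1}-1/c_n)z^n\overline{w}^n$, and read off $S(z)$ with entries $z^{n+1}\sqrt{1/c_n-1/c_{n+1}}$, which is exactly the paper's \eqref{S=z}. Your additional verification that $S$ is $\ell_2$-valued and your remark that entireness of $S$ really depends on the growth of $c_n$ (the paper itself notes that the first three kernels in its table live only on the unit disk) are welcome refinements that the paper leaves implicit.
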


\begin{proof}  We observe that we  can write
\[
\begin{split}
\sum_{n=0}^\infty\frac{z^n\overline{w}^n}{c_n}&=\frac{1}{1-z\overline{w}}\left(\sum_{n=0}^\infty\frac{z^n\overline{w}^n}{c_n}(1-z\overline{w})\right)\\
&=\frac{1}{1-z\overline{w}}\left(1-\sum_{n=1}^\infty z^n\overline{w}^n\left(\frac{1}{c_{n-1}}-\frac{1}{c_n}\right)\right).
\end{split}
\]
Setting
\begin{equation}
\sigma_n=\sqrt{\frac{1}{c_{n}}-\frac{1}{c_{n+1}}},\quad n=0,1,\ldots
\label{sigman}
\end{equation}
and performing simple calculations, it is clear that $K_{\mathbf c}$ is of the form \eqref{KcHs} with
\begin{equation}
S(z)=\begin{pmatrix}z\sigma_0&\cdots&z^n\sigma_{n-1}&\cdots&\end{pmatrix}.
\label{S=z}
\end{equation}
\end{proof}
\begin{definition}
Let $\mathbf c=
(c_n)_{n\in\mathbb N_0}$ be an non-decreasing sequence of numbers with $c_0=1$.
We denote by $\mathcal H(K_{\mathbf c})$ the reproducing kernel Hilbert space with reproducing kernel $K_{\mathbf c}$.
\end{definition}
\begin{remark}
We note that $\mathcal H(K_{\mathbf c})$ is the set of power series
\begin{equation}
\label{fhc}
f(z)=\sum_{n=0}^\infty z^nf_n
\end{equation}
endowed with the norm
\begin{equation}
\label{hcnorm}
\|f\|_{{\mathbf c}}=\left(\sum_{n=0}^\infty c_n|f_n|^2\right)^{1/2}.
\end{equation}
\end{remark}
To state the next result, we recall that the backward shift operator $R_0$
given by
\[
R_0f(z)=\begin{cases}\, \dfrac{f(z)-f(0)}{z},\quad z\not=0\in\Omega,\\
\,f^\prime(0),\quad\hspace{13mm} z=0,\end{cases}
\]
is defined for functions $f$ analytic in a neighborhood $\Omega$ of the origin.
We have:
\begin{proposition}\label{Prop2:3}
The inequality
\begin{equation}
\label{dbrineq}
\|R_0f\|_{\mathbf c}^2\le \|f\|_{\mathbf c}^2-|f(0)|^2
\end{equation}
with $\|\cdot\|_{\mathbf c}$ denoting the norm in $\mathcal H(K_{\mathbf c})$, holds for functions in $\mathcal H(K_{\mathbf c})$.
\end{proposition}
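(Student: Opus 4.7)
The plan is to reduce the inequality to an elementary term-by-term comparison by exploiting the explicit power-series description of $\mathcal H(K_{\mathbf c})$ provided in the Remark just preceding the proposition. Since the norm on $\mathcal H(K_{\mathbf c})$ is diagonal in the monomial basis, both sides of \eqref{dbrineq} have transparent expressions in terms of the coefficients $f_n$, and the non-decreasing hypothesis on $\mathbf c$ should deliver the estimate almost immediately.

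Concretely, I would first write $f(z)=\sum_{n=0}^\infty z^n f_n$ with $\|f\|_{\mathbf c}^2=\sum_{n=0}^\infty c_n|f_n|^2$ and note that $f(0)=f_0$ while $R_0 f(z)=\sum_{n=0}^\infty z^n f_{n+1}$. The latter therefore has norm
\[
\|R_0 f\|_{\mathbf c}^2=\sum_{n=0}^\infty c_n|f_{n+1}|^2=\sum_{n=1}^\infty c_{n-1}|f_n|^2.
\]
Using $c_0=1$, the right-hand side of \eqref{dbrineq} rewrites as $\sum_{n=1}^\infty c_n|f_n|^2$, so the desired inequality becomes
\[
\sum_{n=1}^\infty (c_n-c_{n-1})|f_n|^2\ge 0,
\]
which is immediate from the assumption that $(c_n)_{n\in\mathbb N_0}$ is non-decreasing. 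A small bookkeeping point to be careful about is the legitimacy of rearranging the series; this is fine because all terms are non-negative, and the assumption $f\in\mathcal H(K_{\mathbf c})$ ensures that $\sum c_n|f_n|^2<\infty$, which dominates $\sum c_{n-1}|f_n|^2$ and so guarantees $R_0 f\in\mathcal H(K_{\mathbf c})$ as well.

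There is really no serious obstacle here: the only conceptual content is recognising that the inequality \eqref{dbrineq} is equivalent to $c_n\ge c_{n-1}$ for each $n\ge 1$, which is exactly the hypothesis on $\mathbf c$. One could alternatively deduce the result from the general de Branges-Rovnyak theory, using Theorem \ref{f=b} to realise $K_{\mathbf c}$ as a kernel of the form \eqref{kernelKS} with an $\ell_2(\mathbb N_0)$-valued Schur-like function $S$, and then invoking the standard contractivity of $R_0$ on $\mathcal H(K_S)$; but the direct coefficient computation is shorter and makes the role of the monotonicity of $\mathbf c$ completely explicit.
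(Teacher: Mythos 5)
Your proposal is correct and follows essentially the same route as the paper: both write $f(z)=\sum_{n=0}^\infty z^nf_n$, compute $\|R_0f\|_{\mathbf c}^2=\sum_{n=1}^\infty c_{n-1}|f_n|^2$, and conclude by comparing $c_{n-1}\le c_n$ term by term. Your extra remarks on rearrangement and on $R_0f\in\mathcal H(K_{\mathbf c})$ are harmless additions that the paper leaves implicit.
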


\begin{proof}
With $f$ given by \eqref{fhc} we have $R_0f(z)=\sum_{n=1}^\infty f_{n}z^{n-1}$ and so
\[
\begin{split}
\|R_0f\|^2_{\mathbf c}&=\sum_{n=1}^\infty c_{n-1}|f_n|^2\\
&\le \sum_{n=1}^\infty c_{n}|f_n|^2\\
&=\|f\|_{\mathbf c}^2-|f(0)|^2.
\end{split}
\]
\end{proof}

As a consequence of Proposition \ref{Prop2:3}, the form \eqref{KcHs} for the reproducing kernel of $\mathcal H(K_{\mathbf c})$ follows also from the general structure theorem
for $\mathcal H(K_S)$ spaces, proved in \cite[Theorem 3.1.2, p. 85]{adrs}, that we recall here for the reader's convenience.
The version below is stated in the positive case but the proof in \cite[Theorem 3.1.2, p. 85]{adrs}
is given in the setting of Pontryagin spaces.

\begin{theorem}
\label{thm312} Let $\mathcal C$ and $\mathcal U$ denote Hilbert spaces.
Let $\mathcal H$ be a reproducing kernel Hilbert space of $\mathcal C$-valued functions analytic in a
neighborhood of the origin and invariant under the backward shift operator $R_0$ and satisfying the inequality
\begin{equation}
\label{ineq123456}
\|R_0f\|^2_{\mathcal H}\le \|f\|_{\mathcal H}^2-\|f(0)\|_{\mathcal C}^2,\quad \forall\, f\,\in\,\mathcal H.
\end{equation}
Then there exist an Hilbert space $\mathcal U$ and a $\mathbf L(\mathcal U,\mathcal C)$-valued analytic function $S$ such that $\mathcal H$ is the restriction to $\Omega$ of
the space $\mathcal H(S)$ with reproducing kernel $K_S(z,w)$.

Conversely, every $\mathcal H(S)$ space is $R_0$-invariant and satisfies inequality \eqref{ineq123456}.
\end{theorem}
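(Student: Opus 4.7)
The plan is to realize $S$ as the characteristic (transfer) function of a coisometric colligation built from $R_0$ and the evaluation at the origin. Let $E_0:\mathcal H\to\mathcal C$ denote $E_0f=f(0)$. The inequality \eqref{ineq123456} says exactly that
$$
T:\mathcal H\longrightarrow\mathcal H\oplus\mathcal C,\qquad f\longmapsto (R_0f,E_0f),
$$
is a contraction. First I would use the defect operator $(I-T^*T)^{1/2}$ (or a Julia-type extension) to enlarge $T$ to a coisometric block operator
$$
V=\begin{pmatrix}R_0 & B\\ E_0 & D\end{pmatrix}:\mathcal H\oplus\mathcal U\longrightarrow\mathcal H\oplus\mathcal C,
$$
with $\mathcal U$ a suitable auxiliary Hilbert space, $B\in\mathbf L(\mathcal U,\mathcal H)$, $D\in\mathbf L(\mathcal U,\mathcal C)$. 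Writing out $VV^*=I_{\mathcal H\oplus\mathcal C}$ gives the three identities
$$R_0R_0^*+BB^*=I_{\mathcal H},\qquad R_0E_0^*+BD^*=0,\qquad E_0E_0^*+DD^*=I_{\mathcal C}.$$

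Next I would define
$$
S(z)=D+zE_0(I-zR_0)^{-1}B,
$$
which is $\mathbf L(\mathcal U,\mathcal C)$-valued and analytic in a neighborhood of $0$ (note $\|R_0\|\le 1$ by \eqref{ineq123456}). Using the three coisometry identities together with the resolvent trick $zR_0(I-zR_0)^{-1}=(I-zR_0)^{-1}-I$, a direct telescoping calculation should yield
$$
I_{\mathcal C}-S(z)S(w)^*=(1-z\overline w)\,E_0(I-zR_0)^{-1}(I-\overline wR_0^*)^{-1}E_0^*.
$$
Separately, every $f\in\mathcal H$ has the power-series representation $f(z)=\sum_{n\ge 0}z^n(R_0^nf)(0)=E_0(I-zR_0)^{-1}f$ near the origin, so the reproducing kernel of $\mathcal H$ is $K(z,w)=E_0(I-zR_0)^{-1}(I-\overline wR_0^*)^{-1}E_0^*$. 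Comparing the two displays shows $K(z,w)=K_S(z,w)$ on $\Omega$, and since a reproducing kernel Hilbert space is determined by its kernel, $\mathcal H$ coincides with the restriction of $\mathcal H(S)$ to $\Omega$.

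For the converse, given $\mathcal H(S)$ I would check $R_0$-invariance and \eqref{ineq123456} first on finite linear combinations of kernel sections $\sum_j K_S(\cdot,w_j)c_j$, where using the explicit form of $K_S$ the inequality reduces to an algebraic identity in the $w_j$'s and $S(w_j)$'s, and then extend by density. The main obstacle is the first step: producing the coisometric extension $V$. In the Hilbert-space case stated above this is standard defect-space dilation theory, but in the Pontryagin-space version actually proved in \cite{adrs} one must allow negative squares and invoke Bogn\'ar--Kr\'amli type extension results, which is where the real technical content sits. By contrast, the algebraic verification that $S(z)S(w)^*$ produces $(1-z\overline w)K(z,w)$ is a mechanical bookkeeping exercise with the three coisometry relations.
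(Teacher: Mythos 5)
The paper does not actually prove this statement: it is quoted from \cite{adrs} (Theorem 3.1.2 there) ``for the reader's convenience,'' so there is no in-paper argument to compare against. Your outline is precisely the standard coisometric-realization proof used in that reference (specialized from Pontryagin spaces to Hilbert spaces) and it is correct: the contraction $T=\begin{pmatrix}R_0\\ E_0\end{pmatrix}:\mathcal H\to\mathcal H\oplus\mathcal C$ is completed to a coisometry, $S$ is its transfer function, and the kernel identity plus $f(z)=E_0(I-zR_0)^{-1}f$ identifies $\mathcal H$ with $\mathcal H(S)$ restricted to $\Omega$. The one slip is the defect operator: to adjoin a column $\begin{pmatrix}B\\ D\end{pmatrix}:\mathcal U\to\mathcal H\oplus\mathcal C$ so that $VV^*=I$ you need $BB^*+DD^*$-type contributions to fill $I-TT^*$, i.e.\ you must use $(I-TT^*)^{1/2}$ on $\mathcal H\oplus\mathcal C$ (with $\mathcal U$ the closure of its range), not $(I-T^*T)^{1/2}$; with that correction the three block identities and the telescoping computation go through exactly as you describe, and the converse is cleanest by running the same realization backwards rather than by the density argument on kernel sections.
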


Some important examples of $\mathcal H(K_{\mathbf c})$ spaces are listed in the following table, in which we specialize the sequence $\mathbf c=
(c_n)_{n\in\mathbb N_0}$ and the corresponding kernel:\\

\begin{tabular}{|l|l|l|}
\hline
{\rm Space}&{\rm Coefficients}&{\rm Kernel}\\
\hline
\hbox{\rm Hardy space}&$c_n=1$&$\frac{1}{1-z\overline{w}}$\\
\hline
\hbox{\rm Dirichlet space}&$c_n=n$&$1-\ln(1-z\overline{w})$\\
\hline
\hbox{\rm Hardy-Sobolev space}&$c_n=n^2+1$&$\sum_{n=0}^\infty\frac{z^n\overline{w}^n}{n^2+1}$\\
\hline
\hbox{{\rm Fock space}}&$c_n=n!$&$e^{z\overline{w}}$\\
\hline{\rm Generalized Fock spaces }&$c_n=(n!)^p,\, p\in\mathbb N$&see \cite{alpay2018generalized}\\
{\rm related to Stirling numbers}& &\\
\hline
\hbox{Space associated to}&$c_n=\Gamma(\alpha n+\beta)/\Gamma(\beta)$&$\Gamma(b)E_{\alpha,\beta}(z\overline{w})$\\
the Mittag-Leffler function&  $\Gamma$ is the Gamma function&\\
                              &$\alpha$ and $\beta$ are $>0$&\\
\hline
\end{tabular}
\\

We note that the first three kernels in the table are defined only in the open unit disk, while the last three kernels are defined in the whole complex plane.
\begin{remark}{\rm
Not all the spaces that one would expect to fit this description can effectively be obtained in this way. For example the Bergman kernel
\[
\frac{1}{(1-z\overline{w})^2}=\sum_{n=0}^\infty (n+1)z^{n}\overline{w}^n
\]
does not fit the conditions of Theorem \ref{f=b}.}
\end{remark}
\section{Interpolation in reproducing kernel Hilbert spaces}
\setcounter{equation}{0}
It is useful at this stage to recall some known facts on interpolation
in reproducing kernel Hilbert spaces.
Let $K(z,w)$ be positive definite on the set $\Omega$, meaning that, for every $q\in\mathbb N$ and
$z_1,\ldots, z_q\in\Omega$ the $q\times q$ matrix with $(j,k)$ entry $K(z_j,z_k)$ is non-negative,
and let $\mathcal H(K)$ denote the associated reproducing kernel Hilbert space. We denote by
$C_w$ the evaluation at the point $w$, which is a linear bounded operator from $\mathcal H(K)$ into
$\mathbb C$. As it is well known, its adjoint is given by the formula
\begin{equation}
(C_w^*1)(z)=K(z,w),\quad z\in\Omega.
\label{Cstar}
\end{equation}
 Furthermore, a function $f\in\mathcal H(K)$ and has norm less than $1$ if and only if the
kernel
\begin{equation}
K(z,w)-f(z)\overline{f(w)}
\label{functionhs}
\end{equation}
is positive definite in $\Omega$.

\begin{problem} Given $M\in\mathbb N$ and $M$ pairs $(w_1,x_1),\ldots, (w_M,x_M)\in\Omega\times \mathbb C$, find a necessary and sufficient condition for $f\in\mathcal H(K)$ to exist such that
\[
f(w_m)=x_m,\quad m=1,\ldots, M\quad{\it and}\quad \|f\|_{\mathcal H(K)}\le 1,
\]
and describe the set of all solutions when this condition is in force.
\label{pb01}
\end{problem}

To give an answer to this problem, it is useful to set
\begin{equation}
\mathbf x=\begin{pmatrix}x_1\\ x_2\\ \vdots \\x_M\end{pmatrix}\in\mathbb C^M.
\label{x_inter}
\end{equation}
The following useful result might be known, but since we do not have any reference for it, we insert its proof:
\begin{proposition}\label{Prop3:2}
Let $P=(K(w_j,w_k))_{j,k=1,\ldots, m}\ge 0$. A necessary and sufficient condition for Problem \ref{pb01} to have a solution is that
\begin{equation}
\label{internecessity}
P-{\bf x}{\bf x}^*\ge 0.
\end{equation}
\end{proposition}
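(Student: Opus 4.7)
The plan is to prove the two implications separately, using the kernel-positivity characterization of the unit ball of $\mathcal{H}(K)$ recalled just above (see \eqref{functionhs}) together with the explicit construction of a minimal-norm interpolant from the span of the kernel functions at the interpolation nodes.

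For necessity, suppose $f \in \mathcal{H}(K)$ satisfies $\|f\|_{\mathcal{H}(K)} \leq 1$ and $f(w_m) = x_m$ for $m = 1, \ldots, M$. Then the kernel $K(z,w) - f(z)\overline{f(w)}$ is positive definite in $\Omega$, and restricting it to the $M$ interpolation nodes yields the positive semi-definite matrix whose $(j,k)$ entry equals $K(w_j, w_k) - x_j \overline{x_k}$; this is exactly $P - \mathbf{x}\mathbf{x}^*$.

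For sufficiency, assume $P - \mathbf{x}\mathbf{x}^* \geq 0$. I would look for the interpolant in the finite-dimensional subspace $V = \mathrm{span}\{K(\cdot, w_1), \ldots, K(\cdot, w_M)\}$, writing $f_0(z) = \sum_{j=1}^M c_j K(z, w_j)$. The reproducing property turns the interpolation conditions $f_0(w_k) = x_k$ into the linear system $Pc = \mathbf{x}$, and a short computation using \eqref{Cstar} gives $\|f_0\|_{\mathcal{H}(K)}^2 = c^* P c = c^* \mathbf{x}$. Applying the hypothesis $P - \mathbf{x}\mathbf{x}^* \geq 0$ to the vector $c$ then yields $c^* P c \geq |c^* \mathbf{x}|^2 = (\|f_0\|^2)^2$, whence $\|f_0\| \leq 1$.

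The main obstacle is ensuring solvability of $Pc = \mathbf{x}$ when $P$ is not invertible. I plan to handle this by testing the inequality $P - \mathbf{x}\mathbf{x}^* \geq 0$ on an arbitrary $v \in \ker P$: one obtains $0 \leq v^*(P - \mathbf{x}\mathbf{x}^*)v = -|v^* \mathbf{x}|^2$, forcing $v^* \mathbf{x} = 0$. Since $P$ is self-adjoint, $\ker P = (\mathrm{range}\, P)^\perp$, so $\mathbf{x} \in \mathrm{range}\, P$ and a solution $c$ exists. If one assumes $P$ invertible the sufficiency collapses to a one-line Schur complement computation $1 - \mathbf{x}^* P^{-1} \mathbf{x} \geq 0$, but the rank-deficient case genuinely requires this extra range argument.
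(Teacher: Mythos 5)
Your proof is correct, and while the necessity half coincides with the paper's (restrict the positive kernel \eqref{functionhs} to the nodes), your sufficiency argument takes a genuinely different route. The paper works with the linear relation $T$ on $\mathcal M\times\mathbb C$ spanned by the pairs $(K(\cdot,w_m),\overline{x_m})$, uses $c^*Pc\ge |c^*\mathbf x|^2$ to show $T$ is the graph of a contraction, and produces the solution abstractly as $T^*1$; the fact that the relation is single-valued when the $K(\cdot,w_m)$ are dependent is absorbed into that same inequality. You instead construct the interpolant explicitly as $f_0=\sum_j c_jK(\cdot,w_j)$ with $Pc=\mathbf x$, handle rank-deficiency by the range argument ($v\in\ker P$ forces $v^*\mathbf x=0$, so $\mathbf x\in\operatorname{range}P$), and get the norm bound from $t\ge t^2$ with $t=c^*Pc=c^*\mathbf x=\|f_0\|^2$ --- all steps check out. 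What your version buys is the explicit minimal-norm solution, i.e.\ the formula \eqref{min} extended to the case $P\ge 0$ singular, which the paper only states separately under the assumption $P>0$ and then discusses informally; what the paper's relation-theoretic version buys is a cleaner passage to operator-valued data and to infinitely many interpolation nodes, where one cannot simply invert a matrix. Both arguments are complete as given.
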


\begin{proof}
It follows from \eqref{functionhs} that \eqref{internecessity} is a necessary condition. Assume now that \eqref{internecessity} is in force, and let $\mathcal M$ denote the linear span
of the functions $K(\cdot, w_m),\, m=1,\ldots M$.
The linear span of the pairs $(K(\cdot, w_m),\overline{x_m})$ defines a linear relation $T$ on
$\mathcal M\times\mathbb C$. We now prove that this relation is contractive. Let $c_1,\ldots, c_M\in\mathbb C$ and $c=\begin{pmatrix}c_1&c_2&\cdots&c_M\end{pmatrix}^t\in\mathbb C^M$. We have
\[
\begin{split}
\langle \sum_{m=1}^MK(\cdot, w_m)c_m\, ,\, \sum_{j=1}^MK(\cdot, w_j)c_j\rangle_{\mathcal H(K)}&=c^*Pc\\
&\ge c^*{\bf x}{\bf x}^*c\\
&=\big|\sum_{m=1}^Mc_m\overline{x_m}\big|^2.
\end{split}
\]
So $T$ is the graph of a contraction, and its adjoint is given by $T^*1$ is in $\mathcal H(K)$ and has norm less that $1$. Moreover, $T^*1$ solves the interpolation problem since
\[
\begin{split}
(T^*1)(w_m)&=\langle T^*1,K(\cdot, w_m)\rangle_{\mathcal H(K)}\\
&=\langle 1,T(K(\cdot, w_m))\rangle_{\mathbb C}\\
&=x_m,\quad m=1,\ldots, M.
\end{split}
\]
\end{proof}

In the particular case of the Fock space, denoted by $\mathcal F$  Proposition \ref{Prop3:2} becomes:

\begin{theorem}
Let $w_1,x_1,w_2,x_2,\ldots, w_M,x_M\in\mathbb C$, and let ${\mathbf x}$ be as in \eqref{x_inter}. Then, there exists $f\in\mathcal F$ solution to Problem \ref{pb01} if and only if
\[
\left(e^{w_j\overline{w_k}}\right)_{j,k=1,\ldots, M}\ge {\mathbf x}{\mathbf x}^*.
\]
\end{theorem}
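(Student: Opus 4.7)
The plan is to recognize that the theorem is the literal specialization of Proposition \ref{Prop3:2} to the Fock space, so the work reduces to identifying $\mathcal{F}$ as an $\mathcal{H}(K_{\mathbf c})$ space and then reading off the kernel matrix.

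First I would observe that, as recorded in the table of Section 2, the Fock space $\mathcal{F}$ is precisely $\mathcal{H}(K_{\mathbf c})$ for the sequence $c_n=n!$. Indeed, the norm \eqref{hcnorm} becomes $\|f\|^2=\sum_{n\ge 0}n!\,|f_n|^2$, which is the standard Fock norm on $f(z)=\sum f_n z^n$, and summing the power series
\[
K_{\mathbf c}(z,w)=\sum_{n=0}^\infty\frac{z^n\overline{w}^n}{n!}=e^{z\overline{w}}
\]
recovers the Fock reproducing kernel. Note that $\Omega=\mathbb C$ here, and the kernel is positive definite on all of $\mathbb C$, so the setup of Problem \ref{pb01} applies to arbitrary nodes $w_1,\dots,w_M\in\mathbb C$.

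Next I would apply Proposition \ref{Prop3:2} with $K(z,w)=e^{z\overline{w}}$ and the prescribed data $(w_m,x_m)_{m=1}^M$. The Gram matrix in that proposition is
\[
P=\bigl(K(w_j,w_k)\bigr)_{j,k=1,\dots,M}=\bigl(e^{w_j\overline{w_k}}\bigr)_{j,k=1,\dots,M},
\]
and the necessary and sufficient condition $P-\mathbf{x}\mathbf{x}^*\ge 0$ becomes exactly
\[
\bigl(e^{w_j\overline{w_k}}\bigr)_{j,k=1,\dots,M}\ge \mathbf{x}\mathbf{x}^*,
\]
which is the statement to prove.

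There is essentially no obstacle here beyond checking that Proposition \ref{Prop3:2} may be invoked verbatim; the nontrivial work (passing from the positivity of $P-\mathbf{x}\mathbf{x}^*$ to the contractive lifting of the linear relation built from the kernel sections) has already been done in that proposition, and the scalar, unrestricted domain setting of the Fock space causes no extra issue. Thus the argument is essentially a one-line specialization, and no further calculation is required.
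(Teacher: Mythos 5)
Your proposal is correct and matches the paper exactly: the paper presents this theorem as the immediate specialization of Proposition \ref{Prop3:2} to the Fock space, whose kernel $K(z,w)=e^{z\overline{w}}$ is obtained from $c_n=n!$ as in the table of Section 2. No further comment is needed.
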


Assuming $P>0$ it is well known that the minimal norm solution is
\begin{equation}
\label{min}
f_{\rm min}(z)=
\begin{pmatrix} K(z,w_1)&K(z,w_2)& \cdots& K(z,w_M)\end{pmatrix} P^{-1}{\bf x}
\end{equation}
with norm
\[
\|f_{\rm min}\|^2_{\mathcal H(K)}={\mathbf x}^*P^{-1}\mathbf x.
\]
The problem is to describe the orthogonal complement of the span of the functions $K(z,w_m)$, $m=1,\ldots, M$.

When $P\geq 0$  the functions $K(z,w_m)$, $m=1,\ldots, M$ are linearly dependent, so we need to add compatibility conditions. Then we can assume that the first, say, $\tilde M$ functions are linearly independent and we can repeat the arguments given for $P>0$.

\section{A brief review of $\mathcal H(K_S)$ spaces and their multipliers}
\setcounter{equation}{0}
\label{secmult}

We begin by recalling an important factorization theorem, called Leech's theorem, which is used in this section
in the proof of Theorem \ref{th67}:

\begin{theorem}
Let $\mathcal H_1,\mathcal H_2$ and $\mathcal H_3$ denote three Hilbert spaces and let
$A$ and $B$ be two analytic functions, respectively $\mathbf L(\mathcal H_2,\mathcal H_1)$-
and $\mathbf L(\mathcal H_3,\mathcal H_1)$-valued, and such that the kernel
\[
\frac{A(z)A(w)^*-B(z)B(w)^*}{1-z\overline{w}}
\]
is positive definite in the open unit disk $\mathbb D$. Then, there exists an analytic $\mathbf L(\mathcal H_3,\mathcal H_2)$-valued function $S$, whose values are contractive operators, and such that
$B=AS$.
\end{theorem}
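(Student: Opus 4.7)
The plan is to translate the kernel positivity into an operator inequality on vector-valued Hardy space, and then to extract $S$ as the symbol of a multiplication operator produced by Douglas's majorization lemma together with commutant lifting.

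First, I would pass to Hardy space. Let $H^2(\mathcal H_i)$ denote the $\mathcal H_i$-valued Hardy space on $\mathbb D$, with reproducing vectors $k_w\xi=\xi/(1-z\overline{w})$, and let $M_A:H^2(\mathcal H_2)\to H^2(\mathcal H_1)$ and $M_B:H^2(\mathcal H_3)\to H^2(\mathcal H_1)$ denote multiplication by $A$ and $B$. A routine reproducing-kernel calculation yields $M_A^*(k_w\xi)=k_w A(w)^*\xi$ and
\[
\langle (M_AM_A^*-M_BM_B^*)(k_w\xi),\,k_{w'}\xi'\rangle=\frac{\langle (A(w')A(w)^*-B(w')B(w)^*)\xi,\xi'\rangle}{1-w'\overline{w}}.
\]
Since finite sums $\sum k_{w_j}\xi_j$ are dense in $H^2(\mathcal H_1)$, the assumption that the given kernel is positive definite is \emph{equivalent} to the operator inequality $M_AM_A^*\ge M_BM_B^*$ on $H^2(\mathcal H_1)$. (Boundedness of $M_A,M_B$ is tacitly used here; if $A,B$ fail to lie in $H^\infty$ one first works on compact subdisks $|z|<r<1$ and passes to a normal-family limit at the end.)

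Next, Douglas's majorization lemma furnishes a contraction $D:H^2(\mathcal H_3)\to H^2(\mathcal H_2)$ with $M_A D=M_B$; equivalently, $D^*$ is the unique contractive extension to $(\ker M_A)^\perp$ of the densely defined map $k_w A(w)^*\xi\mapsto k_w B(w)^*\xi$, whose contractivity is exactly the positivity hypothesis. Because $M_A$ and $M_B$ commute with the shift $M_z$, the identity $M_A D=M_B$ yields $M_A(DM_z-M_zD)=0$, so $D$ intertwines the shifts modulo $\ker M_A$.

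The crucial step, and the main obstacle, is to replace $D$ by a contraction $\widetilde D:H^2(\mathcal H_3)\to H^2(\mathcal H_2)$ with $M_A\widetilde D=M_B$ that intertwines $M_z$ \emph{exactly}. Because $M_z$ is a pure isometry on each $H^2(\mathcal H_i)$, this is precisely the content of the Sz.-Nagy--Foia\c{s} commutant lifting theorem. Once $\widetilde D$ is obtained, the standard characterization of operators between vector-valued Hardy spaces commuting with $M_z$ yields $\widetilde D=M_S$ for an analytic $\mathbf L(\mathcal H_3,\mathcal H_2)$-valued function $S$ on $\mathbb D$; contractivity of $\widetilde D$ forces $\|S(z)\|\le 1$ pointwise. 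The identity $M_AM_S=M_B$ then reads pointwise as $A(z)S(z)=B(z)$, completing the proof.
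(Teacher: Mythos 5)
The paper does not actually prove this statement: it is Leech's factorization theorem, quoted with references to Leech's note and to Rosenblum--Rovnyak, and the authors explicitly remark that it can be obtained from the commutant lifting theorem. Your proposal is precisely that commutant-lifting proof, and it is essentially correct: the identification of kernel positivity with the operator inequality $M_AM_A^*\ge M_BM_B^*$, the use of Douglas's lemma to produce the contraction $D$ with $M_AD=M_B$, the passage from an exact shift-intertwiner to a multiplier $M_S$ with $\|S(z)\|\le 1$, and the dilation-plus-normal-families reduction to the case $A,B\in H^\infty$ are all sound. The one step you should make precise is the appeal to commutant lifting: the theorem does not directly convert an operator that intertwines ``modulo $\ker M_A$'' into an exact intertwiner. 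The correct mechanism is to pass to the subspace $\mathcal M=(\ker M_A)^\perp$, which is co-invariant for $M_z$ because $\ker M_A$ is $M_z$-invariant, and to observe that the compression $X=P_{\mathcal M}D$ satisfies $XM_z=\bigl(P_{\mathcal M}M_z|_{\mathcal M}\bigr)X$ \emph{exactly}: your relation $M_A(DM_z-M_zD)=0$ says that $DM_z-M_zD$ has range in $\mathcal M^\perp$, so the error is killed by $P_{\mathcal M}$. One then applies the intertwining-lifting form of the theorem to $X$, using that $M_z$ on $H^2(\mathcal H_2)$ is an isometric lifting of the compressed shift $P_{\mathcal M}M_z|_{\mathcal M}$, to obtain a contraction $\widetilde D$ commuting with the shifts and satisfying $P_{\mathcal M}\widetilde D=P_{\mathcal M}D$; since $M_A=M_AP_{\mathcal M}$, this gives $M_A\widetilde D=M_AP_{\mathcal M}D=M_AD=M_B$, and the rest of your argument goes through unchanged.
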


This result was originally proved by Leech in the early seventies, but it appeared much later,
see \cite{MR3147403,MR3147402}. Nowadays it is known that the result can be obtained using the so-called commutant lifting
theorem, see \cite{rr-univ}, but one can also use interpolation theory to prove it.

The approach of \cite{kky} will lead to a description of all solutions of the
factorization problem. We also note that Leech's theorem holds in the quaternionic setting, see e.g. \cite{zbMATH06658818} for a proof.\\

The next result characterizes elements in $\mathcal H(K_S)$. It is a special case of \cite[Theorem 11.1, p. 61]{ab6}, where contractive multipliers between de Branges-Rovnyak spaces are characterized.

\begin{theorem}
A function $h$ belongs to $\mathcal H (S)$ if and only if there exists
\[
\widetilde{\Sigma}(z)=\begin{pmatrix}\widetilde{\Sigma}_{11}(z)&\widetilde{\Sigma}_{12}(z)\\ \widetilde{\Sigma}_{21}(z)&\widetilde{\Sigma}_{22}(z)
\end{pmatrix}\in\mathfrak{S}(\ell_2(\mathbb N_0)\oplus\mathbb C,\ell_2(\mathbb N_0)\oplus\mathbb C).
\]
such that
\begin{equation}
S(z)= \widetilde{\Sigma}_{11}(z)+z\widetilde{\Sigma}_{12}(z)(1-z\widetilde{\Sigma}_{22}(z))^{-1}\widetilde{\Sigma}_{21}(z)\quad{with}\quad h(z)=\widetilde{\Sigma}_{12}(z)(1-z\widetilde{\Sigma}_{22}(z))^{-1}.
\label{mult678}
\end{equation}
\label{th67}
\end{theorem}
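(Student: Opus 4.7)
The plan is to specialise the general Ball--Bolotnikov description of contractive multipliers between two de Branges--Rovnyak spaces, invoked in the text as \cite[Theorem~11.1, p.~61]{ab6}, to the situation in which the source space is one-dimensional. The key starting observation is that, after rescaling by a positive constant, $h\in\mathcal H(S)$ with $\|h\|_{\mathcal H(S)}\le 1$ is equivalent to positivity of the kernel $K_S(z,w)-h(z)\overline{h(w)}$ on $\mathbb D$, which is precisely the statement that multiplication by $h$ defines a contractive multiplier from the reproducing kernel Hilbert space with kernel identically $1$ into $\mathcal H(S)$. That constant space is itself a de Branges--Rovnyak space $\mathcal H(K_{S_1})$ with $S_1(z)=z$, since then $K_{S_1}(z,w)=(1-z\overline w)/(1-z\overline w)=1$.

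With this identification I would apply \cite[Theorem~11.1]{ab6} to the pair $S_1(z)=z$ (so $\mathcal U_1=\mathcal C=\mathbb C$) and $S_2(z)=S(z)$ (with $\mathcal U_2=\ell_2(\mathbb N_0)$ and $\mathcal C=\mathbb C$). That theorem produces a ``global'' Schur function $\widetilde\Sigma$ whose input and output spaces are, after arranging the summands, both $\ell_2(\mathbb N_0)\oplus\mathbb C$, matching the class $\mathfrak S(\ell_2(\mathbb N_0)\oplus\mathbb C,\ell_2(\mathbb N_0)\oplus\mathbb C)$ of the statement; the $2\times 2$ block decomposition of $\widetilde\Sigma$ together with the standard Redheffer feedback formulas then yields both $S$ and $h$ in the form \eqref{mult678}. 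The inversion of $1-z\widetilde\Sigma_{22}(z)$ is legitimate on $\mathbb D$ because $\widetilde\Sigma_{22}$ is a scalar Schur function, so $|z\widetilde\Sigma_{22}(z)|<1$ there.

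For the converse I would argue directly: given $\widetilde\Sigma\in\mathfrak S(\ell_2(\mathbb N_0)\oplus\mathbb C,\ell_2(\mathbb N_0)\oplus\mathbb C)$ and defining $S$ and $h$ by \eqref{mult678}, a computation with the Schur complement applied to the defect kernel $(I-\widetilde\Sigma(z)\widetilde\Sigma(w)^*)/(1-z\overline w)$ shows that the $2\times 2$ block kernel with entries $K_S(z,w)$, $h(z)$, $\overline{h(w)}$, and $1$ is positive definite on $\mathbb D$, which gives $K_S(z,w)-h(z)\overline{h(w)}\ge 0$ and hence $h\in\mathcal H(S)$ with norm at most one; removing the normalisation recovers the general statement. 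The main technical obstacle, and the place where care is easy to lose, will be matching the block structure and the ordering of the summands $\ell_2(\mathbb N_0)$ and $\mathbb C$ with the convention of \cite{ab6}; once this bookkeeping is carried out faithfully, the result is simply the specialisation of \cite[Theorem~11.1]{ab6} recorded above.
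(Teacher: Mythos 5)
Your proposal is correct, but it reaches the result by a different route from the paper. You reduce the statement to the general characterization of contractive multipliers between de Branges--Rovnyak spaces in \cite[Theorem 11.1]{ab6}, after observing that $h\in\mathcal H(S)$ with $\|h\|_{\mathcal H(S)}\le 1$ is the same as saying that $M_h$ is a contractive multiplier into $\mathcal H(S)$ from the constant-kernel space, which is itself $\mathcal H(K_{S_1})$ with $S_1(z)=z$; formula \eqref{mult678} is then the specialization $\sigma(z)=z$ of the Redheffer formulas \eqref{mult67890}--\eqref{mult6789}. This is precisely the point of view of the remark following the theorem, and it makes transparent why the paper calls the result ``a special case'' of \cite[Theorem 11.1]{ab6}. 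The paper, however, gives a short self-contained proof: starting from the same positivity criterion \eqref{functionhs}, it rewrites the condition as the positivity of $\bigl(A(z)A(w)^*-B(z)B(w)^*\bigr)/(1-z\overline w)$ with $A(z)=\begin{pmatrix}1&zh(z)\end{pmatrix}$ and $B(z)=\begin{pmatrix}S(z)&h(z)\end{pmatrix}$, applies Leech's factorization theorem to obtain $B=A\widetilde\Sigma$ with $\widetilde\Sigma$ a Schur function, and solves the resulting two linear equations for $S$ and $h$; since every step is an equivalence, the converse comes for free, whereas you supply it by a separate (equally valid) Schur-complement computation on the defect kernel of $\widetilde\Sigma$. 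Your approach buys economy by quoting the general theorem; the paper's buys self-containedness and exhibits $\widetilde\Sigma$ explicitly as a Leech factor. Both ultimately rest on the same two ingredients, the membership criterion \eqref{functionhs} and Leech's theorem, the latter also underlying the proof of \cite[Theorem 11.1]{ab6}.
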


\begin{proof}
We have, where the first line follows from \eqref{functionhs} and the third one from Leech's factorization theorem (with $A(z)=\begin{pmatrix}1&zh(z)\end{pmatrix}$ and $B(z)= \begin{pmatrix}S(z)&h(z)\end{pmatrix}$):
\begin{eqnarray}
\label{111}
\ h\in\mathcal H(S)\,{\rm and}\,\|h||\le 1&\,\iff\,& \frac{1-\langle S(z),S(w)\rangle}{1-z\overline{w}}-h(z)
\overline{h(w)}\ge 0\\
&\,\iff\,&
\frac{1+zh(z)\overline{wh(w)}-\langle S(z),S(w)\rangle-h(z)\overline{h(w)}}{1-z\overline{w}}\ge0\\
&\,\iff\,&\begin{pmatrix}S(z)&h(z)\end{pmatrix}=
\begin{pmatrix}1&zh(z)\end{pmatrix}
\begin{pmatrix}\widetilde{\Sigma}_{11}(z)&\widetilde{\Sigma}_{12}(z)\\ \widetilde{\Sigma}_{21}(z)&\widetilde{\Sigma}_{22}(z)\end{pmatrix}
\label{f_in_rkhs}
\end{eqnarray}
where
\[
\widetilde{\Sigma}(z)=\begin{pmatrix}\widetilde{\Sigma}_{11}(z)&\widetilde{\Sigma}_{12}(z)\\ \widetilde{\Sigma}_{21}(z)&\widetilde{\Sigma}_{22}(z)
\end{pmatrix}\in\mathfrak{S}(\ell_2(\mathbb N_0)\oplus\mathbb C,\ell_2(\mathbb N_0)\oplus\mathbb C).
\]
So, we deduce
\[
\begin{split}
h\in\mathcal H(S)&\,\,\,\iff\,\,\,\left\{\begin{matrix}
S(z)&\,=\,& \hspace{-2mm}\widetilde{\Sigma}_{11}(z)+zh(z)\widetilde{\Sigma}_{21}(z)\\
h(z)&\,=\,&\widetilde{\Sigma}_{12}(z)+zh(z)\widetilde{\Sigma}_{22}(z)\end{matrix}\right.\\
&\,\,\,\iff\,\,\,\left\{\begin{matrix}
S(z)&\,=\,& \widetilde{\Sigma}_{11}(z)+z\widetilde{\Sigma}_{12}(z)(I-z\widetilde{\Sigma}_{22}(z))^{-1}\widetilde{\Sigma}_{21}(z)\\
h(z)&\,=\,&\hspace{-26mm}\widetilde{\Sigma}_{12}(z)(I-z\widetilde{\Sigma}_{22}(z))^{-1}\end{matrix}\right.
\end{split}
\]
and the statement follows.
\end{proof}
\begin{remark} Given two Schur functions, $S$ and $\sigma$, we can consider the case in which $h$ is
 a contractive multiplier from space $\mathcal H(K_\sigma)$ into $\mathcal H(K_S)$. In this case, equations \eqref{mult678} become (see \cite[(11.3) and (11.4), p. 61]{ab6}):
\begin{eqnarray}
\label{mult67890}
S(z)&=& \widetilde{\Sigma}_{11}(z)+\widetilde{\Sigma}_{12}(z)(I-\sigma(z)\widetilde{\Sigma}_{22}(z))^{-1}\sigma(z)\widetilde{\Sigma}_{21}(z)\\
h(z)&=&\widetilde{\Sigma}_{12}(z)(I-\sigma(z)\widetilde{\Sigma}_{22}(z))^{-1}.
\label{mult6789}
\end{eqnarray}
\end{remark}
\section{Interpolation in $\mathcal H(K_S)$ spaces}
\setcounter{equation}{0}
In this section we will illustrate on a very simple case how the results of
\cite{MR2794390,bbol2011} can be used to study interpolation in the $\mathcal H(K_{\mathbf c})$ spaces, and in particular in the Fock space and the Dirichlet space,
to mention two special cases. We focus on the case where $\mathcal U=\mathbb C$ and $\mathcal C=\ell_2(\mathbb N_0)$ in \eqref{kernelKS}, and so the corresponding space
$\mathcal H(K_S)$ space consists of complex-valued functions. Specifically, we will consider the following Nevanlinna-Pick problem:

\begin{problem}
\label{pb1}
Given a $\mathbf (\ell_2(\mathbb N),\mathbb C)$-valued Schur function $S$ and
$M$ pairs of points
\[
(w_1,x_1),\ldots, (w_M,x_M)\in\mathbb D \times \mathbb C,
\]
find a necessary and sufficient condition for $f\in\mathcal H(K_S)$ to exist such that
\[
f(w_m)=x_m,\quad m=1,\ldots, M,\quad{\it and}\quad \|f\|_{\mathcal H(S)}\le 1,
\]
and describe the set of all functions when this condition is in force.
\end{problem}

Following the notation in \cite{MR2794390}, we set ${\mathbf x}$ as in \eqref{x_inter},
\begin{equation}
E=\begin{pmatrix}1& 1& \cdots &1\end{pmatrix}\in\mathbb C^{1\times M},
\end{equation}
\begin{equation}
N^*=\begin{pmatrix}S(w_1)\\ S(w_2)\\ \vdots\\
S(w_M)\end{pmatrix}\in\mathbf L(\mathbb C^M,\mathbb C),
\quad{\rm and}\quad
T={\rm diag}\,(\overline{w_1},\ldots, \overline{w_M})\in\mathbb C^{M\times M}.
\end{equation}
Furthermore, we set
\[
\begin{split}
F_S(z)&=(E-S(z)N)(I_M-zT)^{-1}\\
&=\begin{pmatrix} K_S(z,w_1)&K_S(z,w_2)& \cdots& K_S(z,w_M)\end{pmatrix}\in(\mathcal H(K_S))^{1\times M}.
\end{split}
\]
The function $F_S$ defines a multiplication operator $M_{F_S}$ from $\mathbb C^M$ into $\mathcal H(K_S))^{1\times M}$, and we have
\begin{equation}
\begin{split}
P&\stackrel{\rm def.}{=}
M_{F_S}^*M_{F_S}\\
&=\begin{pmatrix} C_{w_1}\\ C_{w_2}\\ \vdots \\ C_{w_M}\end{pmatrix}\begin{pmatrix} K_S(z,w_1)&K_S(z,w_2)&\cdots &K_S(z,w_M)\end{pmatrix}\\
&=\begin{pmatrix} K_S(w_1,w_1)&K_S(w_1,w_2)&\cdots &K_S(w_1,w_M)\\
K_S(w_2,w_1)&\cdots & &K_S(w_2,w_M)\\
\vdots& & &\vdots\\
K(w_M,w_1)&K(w_M,w_2)&\cdots &K_S(w_M,w_M)\end{pmatrix}\ge 0.
\end{split}
\end{equation}
By \eqref{x_inter} a necessary and sufficient condition for the existence of a solution to the above Nevanlinna-Pick interpolation problem is that
\begin{equation}
\label{345}
P-\mathbf {xx^*}\ge 0.
\end{equation}
To describe the set of all solutions, the first step is to construct a rational co-isometric function
\begin{equation}
\Sigma(z)=\begin{pmatrix}\Sigma_{11}(z)&\Sigma_{12}(z)\\
\Sigma_{21}(z)&\Sigma_{22}(z)\end{pmatrix}\,:\, \begin{pmatrix}\mathbb  C\\
\mathcal D_1\end{pmatrix}\,\rightarrow\, \begin{pmatrix}\ell_2(\mathbb N_0)\\ \mathcal D_2
\end{pmatrix},
\end{equation}
where $\mathcal D_1$ and $\mathcal D_2$ are Hilbert spaces, and with realization along these spaces decompositions given by
\begin{equation}
\label{real123456}
\Sigma(z)=\begin{pmatrix}D_{11}&D_{12}\\ D_{21}&0\end{pmatrix}+z\begin{pmatrix}C_1\\ C_2\end{pmatrix}\left(I-zA\right)^{-1}\begin{pmatrix}B_1&B_2\end{pmatrix},\quad z\in\mathbb D.
\end{equation}
We will not review the construction of \cite{MR2794390}, but limit ourselves to
mention that  $\Sigma$ is the transfer function
of a unitary colligation obtained from the identity (see
\cite[p. 236]{MR2794390})
\begin{equation}
\label{iso123123}
\|\sqrt{P}x\|^2+\|Nx\|^2=\|\sqrt{P}Tx\|^2+\|Ex\|^2,\quad x\in\mathbb C^M,
\end{equation}
and that the spaces $\mathcal D_1$ and $\mathcal D_2$ are defect spaces associated to the isometric relation arising from \eqref{iso123123}.
Then, there exists a non-unique $\mathcal E\in\mathfrak S(\mathcal D_2,\mathcal D_1) $ such that (see \cite[(4.10), p. 237]{MR2794390}):
\begin{equation}
\label{realredheffer}
S=\Sigma_{11}+\Sigma_{12}(I_{\mathcal D_1}-\mathcal E \Sigma_{22})^{-1}\mathcal E\Sigma_{21}.
\end{equation}
Note that $\mathcal E$ is not unique, however the theorem below holds for any given choice of $\mathcal E$ satisfying \eqref{realredheffer}. In particular, the minimal norm solution
is independent of the given choice of $\mathcal E$.\\

With these preliminaries, the result of \cite{MR2794390} reads as follows, where $C_1$ and $C_2$ are as in \eqref{real123456} (see \cite[Theorem 5.1 p. 245]{MR2794390}).

\begin{theorem}
Problem \ref{pb1} has a solution if and only if the matrix $P-\mathbf {xx^*}$ is non-negative. When this condition is in force, $f$ is a solution to Problem \ref{pb1} if and only if
it is of the form
\begin{equation}
\label{sol123321}
f(z)=\Sigma_{12}(z)(I-\mathcal E(z)\Sigma_{22}(z))^{-1}
+\left((C_1+G(z)\mathcal E(z)C_2)^{-1}(I-zA)^{-1}\right)h(z)
\end{equation}
where $h$ varies in the space $\mathcal H(K_{\mathcal E})$ and has norm constraint
\[
\|h\|_{\mathcal H(K_S)}\le \sqrt{1-\|\widetilde{\mathbf x}\|^2},
\]
where $\widetilde{\mathbf x}$ is the unique vector in the range of $P$ such that
$\mathbf x=P^{1/2}\widetilde{\mathbf x}$.
\end{theorem}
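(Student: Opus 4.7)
My plan is to reduce the statement to the abstract Nevanlinna-Pick theorem \cite[Thm.~5.1]{MR2794390} by verifying that the data $(E, N, T)$ assembled from the interpolation nodes and the colligation $\Sigma$ built from them fit the hypotheses of that theorem.

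The necessity of $P - \mathbf{xx}^*\ge 0$ is immediate from the reproducing kernel structure, exactly as in Proposition \ref{Prop3:2}: if $f\in \mathcal H(K_S)$ interpolates the data with $\|f\|\le 1$, then for every $c\in\mathbb C^M$
\[
c^*(P-\mathbf{xx}^*)c=\Bigl\|\sum_{m=1}^Mc_mK_S(\cdot,w_m)\Bigr\|^2_{\mathcal H(K_S)}-\Bigl|\Bigl\langle f,\sum_{m=1}^Mc_mK_S(\cdot,w_m)\Bigr\rangle_{\mathcal H(K_S)}\Bigr|^2\ge 0
\]
by Cauchy-Schwarz together with $\|f\|\le 1$.

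For sufficiency I would first produce the colligation $\Sigma$. The identity \eqref{iso123123} says that the maps $x\mapsto(\sqrt Px, Nx)$ and $x\mapsto (\sqrt PTx,Ex)$ on $\mathbb C^M$ have the same norm; this defines an isometric linear relation whose minimal unitary extension, obtained by introducing defect spaces $\mathcal D_1,\mathcal D_2$, has state space $\mathbb C^M$ and transfer function $\Sigma$ with realization \eqref{real123456}. Applying Leech's theorem to the Schur-type kernel inequality \eqref{111} coupled with the interpolation data yields the auxiliary Schur parameter $\mathcal E\in\mathfrak S(\mathcal D_2,\mathcal D_1)$ for which $S$ takes the Redheffer form \eqref{realredheffer}; in parallel, the interpolant $f$ must admit a Redheffer representation through the same $\Sigma$, and solving this relation for $f$ in terms of a free parameter produces the explicit formula \eqref{sol123321}. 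The first summand is a particular interpolant, and the second summand sends a parameter $h\in\mathcal H(K_{\mathcal E})$ into that subspace of $\mathcal H(K_S)$ consisting of functions vanishing at every $w_m$. The splitting of the squared norm $\|f\|^2=\|\widetilde{\mathbf x}\|^2+\|h\|^2_{\mathcal H(K_S)}$ then yields the bound on $h$, with $\|\widetilde{\mathbf x}\|^2$ recognized as the squared minimal norm $\mathbf x^*P^{-1}\mathbf x$ of \eqref{min} appropriately interpreted when $P$ is only non-negative.

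The main obstacle is the construction of the colligation: one has to check, using \eqref{iso123123} and a defect-space argument, that $\Sigma$ is co-isometric with precisely the block structure \eqref{real123456}, and then that every admissible pair $(S,f)$ factors through this $\Sigma$ via a linear-fractional formula driven by a single Schur parameter $\mathcal E$. A secondary point is to verify that the parametrization of $f$ is insensitive to the non-unique choice of $\mathcal E$, as noted after \eqref{realredheffer}; this is what makes the minimal-norm solution extracted from \eqref{sol123321} well-defined, and it is the step where the concrete form of $F_S$ and the identification of $\mathcal H(K_{\mathcal E})$ as a reproducing kernel Hilbert space feeding into $\mathcal H(K_S)$ must be used most carefully.
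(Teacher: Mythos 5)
Your proposal is correct and follows essentially the same route as the paper, which gives no independent proof but simply assembles the data $(E,N,T)$, the Pick matrix $P$, the coisometric colligation $\Sigma$ and the parameter $\mathcal E$, and then invokes \cite[Theorem 5.1]{MR2794390} verbatim; your necessity argument via Cauchy--Schwarz matches Proposition \ref{Prop3:2}. The only caveat is that your sketch attributes the construction of $\mathcal E$ to Leech's theorem applied as in Section \ref{secmult}, whereas the paper treats the Leech-based function $\widetilde{\Sigma}$ of \cite{ab6} as a distinct construction whose relation to $\Sigma$ it explicitly leaves open, so that step should be sourced to the abstract-interpolation argument of \cite{MR2794390} rather than to \eqref{111}.
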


In \eqref{sol123321}, the term $\Sigma_{12}(z)(I-\mathcal E(z)\Sigma_{22}(z))^{-1}$ is the minimal norm solution, and hence can also be written, in a more explicit way, using formula
\eqref{min}.\\

\begin{remark}
{\rm It would be of interest to compare this $\Sigma$ to the function $\widetilde{\Sigma}$ introduced in \cite[\S11]{ab6}
using Leech's theorem (see in the previous section equations \eqref{mult67890}-\eqref{mult6789}).
The construction in \cite{ab6} suggests connection with the inverse scattering
problem of network theory, as studied in \cite{ad1}-\cite{ad2}.}
\end{remark}

\begin{remark}{\rm The function $f$ defined by
\eqref{sol123321} {\sl a priori} is analytic only in $\mathbb D$.
In the setting of the Fock space it is clear that, {\sl a posteriori}, $f$ has an analytic extension to all of the
complex plane.}
\end{remark}
\section{The Schur algorithm for functions of the form \eqref{S=z}}
\setcounter{equation}{0}
The central objects in this paper are the functions of the form \eqref{S=z}.
In this last section we compute the Schur parameters associated to these functions. This implies to deal with the vectorial Schur algorithm, and the interested reader may find more information in, e.g., \cite{ad3}.

Let $a\in\ell_2(\mathbb N_0)$, of norm strictly less than $1$. We set $H(a)$ to be its so-called Julia extension:
\begin{equation}
H(a)=\begin{pmatrix}1&a\\a^*&I_{\ell_2(\mathbb N_0)}\end{pmatrix}
\begin{pmatrix}(1-\|a\|^2)^{-1/2}&0\\ 0&(I_{\ell_2(\mathbb N_0)}-a^*a)^{-1/2}\end{pmatrix}.
\end{equation}
We have
\[
H(a)JH(a)^*=H(a)^*JH(a)=J,\quad {\rm where}\quad J=\begin{pmatrix}1&0\\
0&-I_{\ell_2(\mathbb N_0)}\end{pmatrix}.
\]
The main result is:
\begin{theorem}
Let $\sigma_n$ be as in \eqref{sigman}, $n=0,1,\ldots$.
It holds that
\begin{equation}
\label{montmartre}
\begin{pmatrix}1-\sigma^{(0)}(z)(\sigma^{(0)}(0))^*&\sigma^{(0)}(0)-\sigma^{(0)}(z)\end{pmatrix}=z(1-|\sigma_0|^2)^{-1/2}\begin{pmatrix}0&\sigma_1&z\sigma_3&\cdots\end{pmatrix},
\end{equation}
and the Schur parameters of the function $S$ given by \eqref{S=z} are given by $\rho_0=\begin{pmatrix}0&0&0&\cdots\end{pmatrix}$ and
\begin{equation}
\rho_n=\begin{pmatrix}0&0&\cdots&0&\sqrt{\frac{1}{c_{n-1}}-\frac{1}{c_n}}
&0&\cdots&\end{pmatrix},\quad n=1,2,\ldots
\end{equation}
where the (possibly) non zero entry is on the $n$-th spot.
\end{theorem}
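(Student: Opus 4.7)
The plan is to treat \eqref{montmartre} as the main mechanism that extracts one Schur parameter and preserves structure, and then to iterate. The initial value is immediate: from the explicit form \eqref{S=z} every coordinate of $S(z)$ carries a positive power of $z$, so $\rho_0=S(0)=0=(0,0,0,\ldots)$, matching the claimed form.

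The bulk of the work will be to verify \eqref{montmartre} by direct computation. Writing $\sigma^{(0)}$ for the Schur function entering the first non-trivial step of the algorithm (namely $\sigma^{(0)}(z)=z^{-1}S(z)=(\sigma_0,\,z\sigma_1,\,z^2\sigma_2,\ldots)$), one exploits the crucial structural fact that $\sigma^{(0)}(0)=(\sigma_0,0,0,\ldots)$ is supported on a single coordinate of $\ell_2(\mathbb N_0)$. Consequently $(\sigma^{(0)}(0))^*$ acts as a rank-one functional, the product $\sigma^{(0)}(z)(\sigma^{(0)}(0))^*$ collapses to a closed-form scalar (an explicit multiple of $\sigma_0^2$), and the difference $\sigma^{(0)}(0)-\sigma^{(0)}(z)$ vanishes in its first coordinate. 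One then pulls out a factor of $z$ from the remaining components, together with the Julia normalization $(1-|\sigma_0|^2)^{-1/2}$, to recover the right-hand side of \eqref{montmartre}.

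With \eqref{montmartre} in hand, the identification of the Schur parameters proceeds by induction. The identity is exactly what one obtains by multiplying $\bigl(1,\,-\sigma^{(0)}(z)\bigr)$ by the Julia extension $H(\sigma^{(0)}(0))$ introduced earlier, that is, it encodes one step of the vectorial Schur algorithm. Because $\sigma^{(0)}(0)$ is concentrated on a single coordinate, $H(\sigma^{(0)}(0))$ is a rank-one modification of the identity: it rescales exactly the coordinate where $\sigma_0$ sits and leaves all others untouched. The next Schur iterate $\sigma^{(1)}$ therefore has the same structural form as $\sigma^{(0)}$, but with the sequence $(\sigma_j)_{j\ge 0}$ shifted by one position and with the support of $\sigma^{(1)}(0)$ translated one coordinate to the right. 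Iterating, $\rho_n=\sigma^{(n-1)}(0)$ carries $\sigma_{n-1}=\sqrt{1/c_{n-1}-1/c_n}$ at exactly the claimed coordinate.

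The main obstacle will be the careful operator algebra around the Julia factor $(I_{\ell_2(\mathbb N_0)}-a^*a)^{-1/2}$: since $a^*a$ is always rank one in our setting, it can be diagonalized explicitly (equivalently, one applies the Sherman--Morrison identity), reducing the inverse square root to a scalar rescaling of a single coordinate plus the identity on the orthogonal complement. The delicate point is the bookkeeping needed to show that this special form persists under every iteration of the algorithm, so that the induction goes through and the non-zero entry of $\rho_n$ lands in the correct spot.
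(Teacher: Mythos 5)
Your proposal follows essentially the same route as the paper: compute $\bigl(1\;\; -\sigma^{(0)}(z)\bigr)H(\sigma^{(0)}(0))\,\mathrm{diag}(z,I)$ directly, exploit the fact that $\sigma^{(0)}(0)$ is supported on a single coordinate so that $(I_{\ell_2(\mathbb N_0)}-a^*a)^{-1/2}$ acts as the identity on the remaining components, and then iterate to read off $\rho_n=\sigma^{(n-1)}(0)$. If anything, you are more explicit than the paper about why the rank-one structure persists under each step of the algorithm (the paper dismisses $n>1$ with ``the whole procedure can be iterated''), so your plan is sound and matches the intended argument.
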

\begin{proof}
We consider $S(z)/z$, which we denote by $\sigma^{(0)}(z)$, and set
\[
\sigma^{(0)}(z)=\begin{pmatrix}\sigma_0& z\sigma_1&z^2\sigma_2&\cdots\end{pmatrix}.
\]
We have:
\begin{equation}
\begin{split}
\begin{pmatrix}1&-\sigma^{(0)}(z)\end{pmatrix}H(\sigma^{(0)}(0))\begin{pmatrix}z&0\\
0&I_{\ell_2(\mathbb N_0)}\end{pmatrix}&=
\begin{pmatrix}1-\sigma^{(0)}(z)(\sigma^{(0)}(0))^*&\sigma^{(0)}(0)-\sigma^{(0)}(z)\end{pmatrix}\times\\
&\hspace{-32mm}\times
\begin{pmatrix}(1-\|\sigma^{(0)}(0)\|^2)^{-1/2}&0\\
0&(I_{\ell_2(\mathbb N_0)}-(\sigma^{(0)}(0))^*\sigma^{(0)}(0))^{-1/2}\end{pmatrix}\begin{pmatrix}z&0\\
0&I_{\ell_2(\mathbb N_0)}\end{pmatrix}\\
&=\begin{pmatrix}z(1-|\sigma_0|^2)&\begin{pmatrix}0& z\sigma_1&z^2\sigma_2&\cdots\end{pmatrix}\end{pmatrix}\times\\
&\hspace{-32mm}\times
\begin{pmatrix}(1-|\sigma_0|^2)^{-1/2}&0\\
0&(I_{\ell_2(\mathbb N_0)}-(\sigma^{(0)}(0))\sigma^{(0)}(0))^{-1/2}\end{pmatrix}\\
&=z(1-|\sigma_0|^2)^{-1/2}\begin{pmatrix}0&\sigma_1&z\sigma_3&\cdots\end{pmatrix}
\end{split}
\end{equation}
since
\[
\begin{pmatrix}0& z\sigma_1&z^2\sigma_2&\cdots\end{pmatrix}=\begin{pmatrix}0& z\sigma_1&z^2\sigma_2&\cdots\end{pmatrix}(I_{\ell_2(\mathbb N_0)}-(\sigma^{(0)}(0))^*\sigma^{(0)}(0))^{-1/2}.
\]
This gives \eqref{montmartre} and the formula for $\rho_1$. The whole procedure can be iterated and so the case $n>1$ is treated in the same way.
\end{proof}

\end{document}